\makeatletter\@addtoreset{equation}{section}\makeatother
\newtheorem{remark}[theorem]{Remark}
\newtheorem{assumption}[theorem]{Assumption}
\newcounter{constantsnumber}
\newcommand{\CT}{\mathcal{T}}
\newcommand{\CV}{\mathcal{V}}
\newcommand{\CJ}{\mathcal{J}}
\newcommand{\CC}{\mathcal{C}}
\newcommand{\bn}{\mbox{\boldmath{$n$}}}
\newcommand{\CW}{\mathcal{W}}
\newcommand{\BBR}{\mbox{$\mathbb{R}$}}
\newcommand{\BBN}{\mbox{$\mathbb{N}$}}
\newfont{\twelvemsb}{msbm10 at 11.6pt}
\newcommand{\tM}{\mathtt{M}}
\newcommand{\tA}{\mathtt{A}}
\newcommand{\tD}{\mathtt{D}}
\newcommand{\tB}{\mathtt{B}}
\title{A mixed finite element method for a biharmonic problem
with weakly imposed Dirichlet boundary condition}
\author{Bishnu P.~Lamichhane\thanks{
School of Information and Physical Sciences, 
 University of Newcastle, 
NSW 2308, Callaghan, {\tt Bishnu.Lamichhane@newcastle.edu.au}}}
\begin{document}
\maketitle

\begin{abstract}
We consider a mixed finite element method for a biharmonic equation with 
clamped boundary conditions  based on 
 biorthogonal systems with 
weakly imposed Dirichlet boundary condition. 
We show that the weak imposition of the boundary condition 
arising from a natural minimisation formulation allows 
to get an optimal a priori error estimate for the finite element scheme improving 
the existing error estimate for such a formulation without weakly 
imposed Dirichlet boundary condition.
We also briefly outline the algebraic formulation arising from the finite element 
method.
\end{abstract}

\begin{keywords}
Biharmonic problem, mixed finite elements, biorthogonal system,  weak Dirichlet boundary condition,  Nitsche approach
\end{keywords}

\begin{AMS}
65N30, 65N15
\end{AMS}

\pagestyle{myheadings}
\thispagestyle{plain}

\section{Introduction}
Thin plates and beams, strain gradient elasticity,   phase separation of a binary mixture and 
fluid flow problems are often modelled by 
fourth order elliptic and parabolic problems \cite{Cia78,EGH02,GR86, WKG06}.
This difficulty of constructing $H^2$ - conforming finite element 
spaces is avoided 
either by using  a discontinuous Galerkin method as in \cite{EGH02,BS05,WKG06}
or by using a mixed formulation as in 
\cite{CR74,CG75,Fal78,Cia78,FO80,BOP80,Mon87,DP01}.

In this paper, we start with a  
mixed finite method due to Ciarlet and Raviart \cite{CR74,CG75,Cia78} using 
different spaces for the stream function and vorticity for a 
fourth order problem with clamped boundary conditions. 
The great advantage of this formulation is that it 
 allows the use of the standard $H^1$-conforming finite element method. 
Working with this formulation for clamped boundary conditions
the a priori error estimate is sub-optimal
\cite{CR74,Cia78,Sch78, GR86, DP01,Lam11a, Zul15}, 
where the finite element method of order $k$ converges with $h^{k-\frac{1}{2}}$ in 
the energy norm. The strong imposition of the Dirichlet boundary condition is 
the main reason for the sub-optimal convergence rate. 
In order to get an optimal estimate, we impose the Dirichlet boundary condition 
weakly using a Nitsche type approach. This leads to an optimal order of convergence
improving the existing a priori error estimate for the biharmonic 
problem with clamped boundary conditions. 
 As in \cite{Lam11a} 
we work with discrete spaces having local basis functions satisfying 
the condition of biorthogonality
for the discretisation 
of the stream function and vorticity. This yields a very 
efficient finite element method to approximate the solution 
of a fourth order problem. While 
the standard symmetric Nitsche apporach requires a penalty parameter \cite{Ste95}, 
 our approach does not require a  penalty parameter.

The structure of the rest of the paper is organised as follows. 
In the rest of this section, we briefly recall a mixed formulation for 
a  biharmonic equation with clamped boundary conditions and 
extend the formulation to include non-homogeneous clamped boundary conditions. 
Section \ref{sec:ana} is devoted for the numerical analysis of the approach. 
We give an algebraic formulation of the finite element scheme in Section \ref{sec:alg}. 
Finally, we draw a conclusion in the last section.

\subsection{Mixed formulation} 
We now derive a mixed formulation of a fourth order problem.
We first briefly recall a mixed formulation of the 
biharmonic problem with homogeneous clamped boundary conditions.
\paragraph{Homogeneous clamped boundary conditions}
 Let  $\Omega \subset \BBR^2$
be a bounded convex domain 
with polygonal boundary $\Gamma = \partial\Omega$ and 
outward pointing normal $\bn$ on  $\Gamma$.
We consider the  biharmonic equation 
\begin{equation}\label{biharm}
 \Delta^2 u  =f \quad\text{in}\quad \Omega
\end{equation}
 with clamped boundary conditions
\begin{equation}\label{clbc}
u =\frac{\partial u}{\partial \bn}=0\quad\text{on}\quad \Gamma.
\end{equation}

Following the same approach as in \cite{CR74,Cia78,Lam11a} 
we recast the biharmonic problem as a minimisation problem with a constraint 
and then reformulate the problem as a three-field formulation. 
The main idea here is to include the weak form of the Dirichlet boundary condition. 
We note that the main difficulty to get optimal error estimates using simplicial Lagrange 
finite element methods for the biharmonic problem is the imposition of 
the Dirichlet boundary condition on the boundary in the strong sense, which induces a loss of 
accuracy in the error estimates. To rectify this we propose to 
impose the Dirichlet boundary condition weakly using a minimisation 
formulation or equivalently Nitsche approach. 
In contrast to other Nitsche approaches,
we do not require a penalty parameter in our formulation.

We use usual notations for Sobolev spaces as \cite{LM72,Ada75,Gri85,BS94}.
We consider the following variational form of the biharmonic problem 
\begin{equation}\label{vbiharm}
J(u)=\inf_{v \in H^2_0(\Omega)}J(v),
\end{equation}
with 
\begin{equation}\label{func}
J(v)=\frac{1}{2} \int_{\Omega}|\Delta v|^2\,dx-\int_{\Omega}f\,v\,dx.
\end{equation}


Let $H^*(\Omega)$ be the dual space of $H^1(\Omega)$.
We now introduce a new unknown 
$\phi = \Delta u$ and write a weak form of this equation as 
\[ \int_{\Omega}\phi\mu \,dx - \langle u,\Delta \mu \rangle=0, 
\quad \mu \in Q,\]
where $\langle u,\Delta \mu \rangle$ is the duality pairing between 
the spaces $H^1(\Omega)$ and its dual $H^*(\Omega)$, and \[ 
Q =\{v\in H^1(\Omega):\, \int_{\Omega} v\,dx =0\}.\]
This is a right choice for the Lagrange multiplier space as 
\[ \int_{\Omega} \phi\,dx =0.\]
Let $V = H^1(\Omega) \times L^2(\Omega)$.
The variational problem \eqref{vbiharm}
can be recast as the minimization problem \cite{Cia78}
\begin{equation}\label{cmbiharm}
\CJ(u,\phi)=\inf_{(v,\psi) \in \CV}\CJ(v,\psi),
\end{equation}
where 
\begin{eqnarray*}
\CJ(v,\psi)&=&\frac{1}{2} \int_{\Omega}|\psi|^2\,dx+ \frac{1}{2} \|v\|^2_{\frac{1}{2},\Gamma}-\int_{\Omega}f\,v\,dx,\\
\CV&=&\{(v,\psi)\in V:\;\int_{\Omega}\psi\,q\,dx -\langle v,\Delta q\rangle =0,\;
q \in Q\}.
\end{eqnarray*}
In the following, the $H^{\frac{1}{2}}(\Gamma)$ inner product is 
denoted by $\langle \cdot,\cdot\rangle_{\frac{1}{2},\Gamma}$ and $H^{\frac{1}{2}}$-norm 
by $\|\cdot \|^2_{\frac{1}{2},\Gamma}$. The dual space of 
$H^{\frac{1}{2}}(\Gamma)$ is denoted by  $H^{-\frac{1}{2}}(\Gamma)$.

\paragraph{Non-homogeneous boundary conditions}

In the following, we consider the biharmonic problem \eqref{biharm} with 
non-homogeneous clamped boundary conditions
 with $g_D\in H^{\frac{1}{2}}(\Gamma),\; g_N \in H^{-\frac{1}{2}}(\Gamma)$.
These boundary conditions are as follows:
\begin{equation}\label{nonhom}
u =g_D\quad\text{and}\quad \frac{\partial u}{\partial \bn}=g_N\quad\text{on}\quad \Gamma.
\end{equation}
Then, we have the minimisation problem \eqref{cmbiharm} with 
\begin{eqnarray*}
\CJ(v,\psi)&=&\frac{1}{2} \int_{\Omega}|\psi|^2\,dx+ \frac{1}{2} \|v-g_D\|^2_{\frac{1}{2},\Gamma}-\int_{\Omega}f\,v\,dx,\\
\CW&=&\{(v,\psi)\in V:\;\int_{\Omega}\psi\,q\,dx -\langle v,\Delta q\rangle =
\langle g_N,q \rangle_{\Gamma}- \langle   \frac{\partial q}{\partial\bn} ,g_D \rangle_{\Gamma},\;
q \in Q\},
\end{eqnarray*}
where  $\langle \cdot,\cdot \rangle_{\Gamma}$ is the duality pairing between the spaces
$H^{\frac{1}{2}}(\Gamma)$ and  its dual $H^{-\frac{1}{2}}(\Gamma)$ 
\begin{remark}
Here, the normal derivative of an $H^1$-function is a generalised 
normal derivative as defined in \cite{Mik11,mclean}. Lemma 4.3 of \cite{mclean} gives the following bound for the normal derivative of $q \in H^1(\Omega)$ (see also \cite{Mik11})
\[ \|\frac{\partial q}{\partial\bn} \|_{-\frac{1}{2},\Gamma} \leq C ( \|q\|_{1,\Omega} + 
\|\Delta q\|_{H^*(\Omega)}).\]
\end{remark}

The problem \eqref{cmbiharm} can be recast as 
a saddle point formulation \cite{Lam11a,CR74,Cia78,DP01}. The saddle point 
problem is:  Given $\ell \in H^{-1}(\Omega)$, find 
$((u,\phi),p) \in V\times Q$ such that 
\begin{equation}\label{saddle}
\begin{array}{llccc}
a((u,\phi),(v,\psi))+&b((v,\psi),p)&=&\ell(v),&
\quad (v,\psi) \in  V,\\
b((u,\phi),q)&&=&g(q),&\quad q\in Q,
\end{array}
\end{equation}
where 
\begin{eqnarray}\label{saddledef}
 &&a((u,\phi),(v,\psi)) =
 \int_{\Omega}\phi\psi\,dx+\langle u, v \rangle_{\frac{1}{2},\Gamma}, \\ \nonumber
&& \ell(v) = \int_{\Omega} fv\,dx +
\langle g_D,v\rangle_{\frac{1}{2},\Gamma},\quad
b((v,\psi),q)= \int_{\Omega}\psi\,q\,dx-\langle v,\Delta q\rangle,
\\ \nonumber  && \text{and}\quad 
g(q) =  \langle g_N,q \rangle_{\Gamma} - \langle   \frac{\partial q}{\partial\bn} ,g_D \rangle_{\Gamma} .
\end{eqnarray}

\paragraph{Consistency}
Let $u\in H^2(\Omega)$ be the solution of the 
biharmonic problem \eqref{biharm} with 
the non-homogeneous boundary conditions \eqref{nonhom}.
Let $\phi = \Delta u$ and $p = - \phi$. 
An integration by parts can be performed to show that 
they satisfy the saddle point equations \eqref{saddle}.

\begin{remark}
[\bf Existence and uniqueness of the solution]
There is a difficulty in proving the coercivity of the bilinear form $a(\cdot,\cdot)$ in 
the saddle point problem \eqref{saddle} as the standard trace theorem \cite{Gri85} 
does not work for the generalised normal derivative \cite{mclean,Mik11}. However, there is 
no problem for defining the standard normal derivative for a function $q_h$ in the standard 
finite element space, see 
the next section. Therefore, we do not analyse the existence and uniqueness of 
the saddle point problem \eqref{saddle}, but rather focus on 
 its discrete counterpart in the following section.
 \end{remark}

\section{Finite element discretizations}\label{sec:ana}
We consider a quasi-uniform and shape-regular triangulation $\CT_h$ of the 
polygonal domain $\Omega$ with the global mesh-size $h$, where $\CT_h$
consists of triangles or parallelograms. 
Let $\CC_h$ be the collection of boundary edges of the triangulation of $\Omega$. 
We use $h_K$ and $h_e$ to denote the sizes of the elements in 
$\CT_h$ and $\CC_h$, respectively. 
Let $S_h \subset H^1(\Omega)$ be a standard Lagrange finite element 
space of order $k\in \BBN$, and $M_h \subset L^2(\Omega)$ 
be another piecewise polynomial space. 
We also set $V_h = S_h\times M_h$.
We have  a well-known approximation result for every $u \in H^{k+1}(\Omega)$ \cite{Bra01}: there exists a function $u_h \in S_h$ such that
\[ h \|u-u_h\|_{1,\Omega} + \|u-u_h\|_{0,\Omega} \leq C h^{k+1} \|u\|_{k+1,\Omega}.\]
In the following, we use a generic constant $C$, which takes 
different values in different occurrences but is always independent of the 
mesh-size. 
We impose the following assumptions on $M_h$.
\begin{assumption} \label{ass0}
We assume that there is a constant $C>0$ independent of 
the mesh-size such that 
\begin{eqnarray}
\|q_h\|_{0,\Omega} \leq C \sup_{\phi_h \in S_h} 
\frac{\int_{\Omega} \phi_h q_h\,dx} {\|\phi_h\|_{0,\Omega}},
\quad q_h \in M_h,
\end{eqnarray}
\end{assumption}
\begin{assumption} \label{ass1}
The space $M_h$ has the approximation property:
\begin{equation}
\inf_{\lambda_h \in M_h}\|\phi-\lambda_h\|_{0,\Omega}\leq Ch^k |\phi|_{k,\Omega},\quad \phi \in H^k(\Omega).
\end{equation} 
\end{assumption}
We use
\[ Q_h =\{ v_h \in S_h:\, \int_{\Omega} v_h\,dx =0\}\]
 to approximate the Lagrange multiplier space $Q$.
Our analysis is based on the following mesh-dependent inner product and the norm induced 
by this inner product on the boundary of $\Omega$ for $s \in [-1,1]$ \cite{Ste95}:
\begin{equation}\label{ip}
  \langle v, \, w \rangle_{s,h} = \sum_{e \in \CC_h} \frac{1}{h_e^{2s}} \int_{e} v\,w\,d\sigma,\quad  v,w \in L^2(\Omega).
  \end{equation}
We will use the mesh-dependent norm for $v_h \in S_h$, 
\[ \|v_h\|_{1,h}^2 = \|v_h\|_{1,\Omega}^2 + \|v_h\|_{\frac{1}{2},h}^2,\]
 where $\|\cdot\|_{\frac{1}{2},h}$ is  the norm induced by the inner product \eqref{ip}.
 In fact,  \[ \|u_h\|_{\frac{1}{2},h}^2  = \sum_{e \in \CC_h} \frac{1}{h_e} \int_{e} u_h^2\,d\sigma.\]

With the definition of $\|\cdot\|_{s,h}$-norm we have 
the following Cauchy-Schwarz type inequality for the inner product $\langle \cdot, \cdot \rangle_{\frac{1}{2},h} $ [3.13 of \cite{Ste95}]:
\begin{eqnarray}\label{cst}
\langle v, \, w \rangle_{\frac{1}{2},h} \leq 
\|v\|_{\frac{1}{2},h} \|w\|_{-\frac{1}{2},h}, \quad v \in H^1(\Omega),\; w \in L^2(\Omega).
\end{eqnarray}

The  discrete biharmonic problem is given as a saddle point problem: 
given $f \in H^{-1}(\Omega)$, $g_D\in H^{\frac{1}{2}}(\Gamma),\; g_N \in H^{-\frac{1}{2}}(\Gamma)$, 
find 
$((u_h,\phi_h),p_h) \in V_h\times S_h$ such that 
\begin{equation}\label{dbiharm}
\begin{array}{llccc}
a_h((u_h,\phi_h),(v_h,\psi_h))+&b_h((v_h,\psi_h),p_h)&=&\ell_h(v_h),&
\quad (v_h,\psi_h) \in  V_h,\\
b_h((u_h,\phi_h),q_h)&&=&g_h(q_h),&\quad q_h\in Q_h,
\end{array}
\end{equation}
where 
\begin{eqnarray*}\label{saddlehdef}
 a_h((u_h,\phi_h),(v_h,\psi_h)) &=& 
 \int_{\Omega}\phi_h\psi_h\,dx+ \langle u_h, \, v_h \rangle_{\frac{1}{2},h}, \;
b_h((v_h,\psi_h),q_h)= \int_{\Omega}\psi_h\,q_h\,dx - \langle v_h, \Delta_h q_h\rangle \\ 
\ell_h(v_h) &=& \int_{\Omega} fv_h\,dx +  \langle g_D,\, v_h \rangle_{\frac{1}{2},h} \quad\text{and}\quad 
g_h(q_h) =  \langle g_N,q_h \rangle_{\Gamma} - \int_{\Gamma}  \frac{\partial q_h}{\partial\bn} g_D\,d\sigma,
\end{eqnarray*}
where for  $ q\in H^{\frac{3}{2} + \epsilon}(\Omega)$ with $\epsilon>0$,  
$\Delta_h:  H^{\frac{3}{2} + \epsilon}(\Omega) \to M_h$ is defined as 
\[\langle v_h, \Delta_h q \rangle  = -\int_{\Omega} \nabla v_h \cdot \nabla q \,dx  +
\int_{\Gamma}   \frac{\partial q}{\partial\bn}  v_h\,d\sigma,\quad v_h \in S_h.\]
We note that $\Delta_h q$ is well-defined due to Assumption \ref{ass0}. 


In order to analyse the finite element problem we introduce the mesh-dependent graph norm 
on $V_h$ defined as 
\begin{eqnarray}\label{meshnorm}
\|(v_h,\psi_h)\|_{a} = \sqrt{ \|\psi_h\|^2_{0,\Omega}  + \|v_h\|_{1,h}^2}
\end{eqnarray}
and the following mesh-dependent norm for the Lagrange multiplier $q_h \in Q_h$ defined as
\[ \|q_h\|^2_{Q_h} = \|q_h\|^2_{0,\Omega} + \|\Delta_h q_h\|_{-1,h}^2,\]
where \[  \|\Delta_h q_h\|_{-1,h} = 
\sup_{v_h \in S_h} \frac{\langle \Delta_h q_h, v_h\rangle}{\|v_h\|_{1,h}}.\]

We can see that the continuity of the bilinear form $a_h(\cdot,\cdot)$ and linear forms 
$\ell_h(\cdot)$ and $g_h(\cdot)$ 
follows from the Cauchy-Schwarz and trace inequalities \cite{FS95}.
The continuity of the bilinear form $b_h(\cdot,\cdot)$ follows from  
\[ \|w_h\|_{1,h} \|\Delta_h q_h\|_{-1,h} =  \|w_h\|_{1,h}
\sup_{v_h \in S_h} \frac{\langle \Delta_h q_h, v_h\rangle}{\|v_h\|_{1,h}} 
\geq \left|\langle \Delta_h q_h, w_h\rangle\right|, \; 
w_h\in S_h,\, q_h \in Q_h.\]
Thus 
\[ |b_h((w_h,\psi_h),q_h) | \leq \|\psi_h\|_{0,\Omega} \|q_h\|_{0,\Omega} + \|w_h\|_{1,h} \|\Delta_h q_h\|_{-1,h} .\]
We now show the inf-sup condition for the bilinear form $b_h(\cdot,\cdot)$.  We need to show the existence 
of a mesh-independent constant $C$ such that 
\begin{equation}\label{infsup}
\sup_{(v_h,\psi_h) \in V_h} \frac{b_h((v_h,\psi_h),q_h)}{\|(v_h,\psi_h)\|_a} \geq C \|q_h\|_{Q_h}.\end{equation}
First we set $v_h=0$ on the  left hand side of the above inequality and use \eqref{ass0} to obtain 
\[ \sup_{(v_h,\psi_h) \in V_h} \frac{b_h((v_h,\psi_h),q_h)}{\|(v_h,\psi_h)\|_a} \geq 
\sup_{\psi_h \in M_h} \frac{\int_{\Omega} q_h \psi_h}{\|\psi_h\|_{0,\Omega}} \geq C \|q_h\|_{0,\Omega}.\]
In the second step, we set $\psi_h = 0$  on the  left hand side of the inequality 
\eqref{infsup} and use the definition of the norm $\|\cdot\|_{-1,h}$ to obtain 
\[\sup_{(v_h,\psi_h) \in V_h} \frac{b_h((v_h,\psi_h),q_h)}{\|(v_h,\psi_h)\|_a} \geq 
\sup_{v_h \in S_h} \frac{\langle v_h, \Delta_h q_h \rangle }{\|v_h\|_{1,h}}  = \|\Delta_h q_h\|_{-1,h}.\]
Now we turn our attention to prove the coercivity of the bilinear form $a_h(\cdot,\cdot)$ 
on the kernel space  $\CV_h$ defined as 
\begin{equation}\label{kernel}
\CV_h= \{(v_h,\psi_h)\in V_h:\;\int_{\Omega}\psi_h\,q_h\,dx-\langle \Delta_h q_h, v_h\rangle=0,\;q_h \in Q_h\}.
\end{equation}
First, we note that 
\[ a_h((v_h,\psi_h),(v_h,\psi_h)) = \|\psi_h\|^2_{0,\Omega} + \|v_h\|^2_{\frac{1}{2},h}.\]
If $(v_h,\psi_h) \in \CV_h$, we have 
\begin{equation}\label{keq}
 \int_{\Omega} \left(\psi_h\,q_h + \nabla v_h \cdot \nabla q_h\right)\,dx  = 
\int_{\Gamma} \frac{\partial q_h}{\partial \bn} v_h\,d\sigma,\quad q_h \in Q_h.
\end{equation}
Let 
\[ q_h = v_h - \frac{1}{|\Omega|} \int_{\Omega} v_h\,dx \in Q_h.\]
Then we have 
\[ \frac{\partial q_h}{\partial \bn}  = \frac{\partial v_h}{\partial \bn} \quad\text{and}\quad 
\nabla q_h = \nabla v_h.\]
Hence for $(v_h,\psi_h) \in \CV_h$, using this  $q_h$ in  
\eqref{keq}, we 
obtain 
\begin{equation}\label{eqn10}
\|\nabla v_h\|_{0,\Omega}^2 = \int_{\Gamma} \frac{\partial v_h}{\partial \bn} v_h\,d\sigma- 
\int_{\Omega} \psi_h\,\left(v_h - \frac{1}{|\Omega|} \int_{\Omega} v_h\,dx\right) \,dx.
\end{equation}
We now apply the Cauchy-Schwarz type inequality for the boundary integral of 
the first term on the right of the above equation 
\[ \left|\int_{\Gamma} \frac{\partial v_h}{\partial \bn} v_h\,d\sigma \right| \leq 
\left\| \frac{\partial v_h}{\partial \bn} \right\|_{-\frac{1}{2},h} \left\| v_h\right\|_{\frac{1}{2},h},\]
so that  \eqref{eqn10} yields
\begin{equation}\label{eqn11}
  \|\nabla v_h\|_{0,\Omega}^2  \leq \left\| \frac{\partial v_h}{\partial \bn} \right\|_{-\frac{1}{2},h} \left\| v_h\right\|_{\frac{1}{2},h} + \|\psi_h\|_{0,\Omega} \left\|v_h - \frac{1}{|\Omega|} \int_{\Omega} v_h\,dx\right\|_{0,\Omega}.
  \end{equation}
In terms of the following trace inequality [(4) of \cite{FS95}]
\[ \left\| \frac{\partial v_h}{\partial \bn} \right\|_{-\frac{1}{2},h}  \leq C\|\nabla v_h\|_{0,\Omega},\]
and Poincar\'e-Friedrichs inequality 
\[ \left\|v_h - \frac{1}{|\Omega|} \int_{\Omega} v_h\,dx\right\|_{0,\Omega} \leq C \|\nabla v_h\|_{0,\Omega},\]
we get  from \eqref{eqn11}
\[  \|\nabla v_h\|_{0,\Omega}^2  \leq  C \left( \|\nabla v_h\|_{0,\Omega}\left\| v_h\right\|_{\frac{1}{2},h} +
\|\psi_h\|_{0,\Omega} \|\nabla v_h\|_{0,\Omega}\right).\]
Hence we have 
\[ \|\nabla v_h\|_{0,\Omega} \leq C( \|\psi_h\|_{0,\Omega} + \left\| v_h\right\|_{\frac{1}{2},h}).\]
Moreover, we have a mesh-independent constant $C$ such that  \cite{Bre03}
\[ \|v_h\|_{0,\Omega} \leq C( \|\nabla v_h\|_{0,\Omega} + \left\| v_h\right\|_{\frac{1}{2},h}).\]

Thus we have the following lemma for the 
coercivity of the bilinear form $a_h(\cdot,\cdot)$ on $\CV_h$.
\begin{lemma}\label{lemma1}
 There exists $\alpha_0>0$ independent of the mesh-size $h$  such that 
\[ a_h((v_h,\psi_h),(v_h,\psi_h))\geq \alpha_0 
(\|v_h\|^2_{1,h}+\|\psi_h\|^2_{0,\Omega}),\; (v_h,\psi_h) \in \CV_h.
\] 
\end{lemma}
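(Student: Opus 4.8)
The plan is to read off the coercivity directly from the diagonal identity already recorded for $a_h$, namely $a_h((v_h,\psi_h),(v_h,\psi_h)) = \|\psi_h\|^2_{0,\Omega} + \|v_h\|^2_{\frac{1}{2},h}$, and then to argue that the two quantities on the right control the full graph norm $\|v_h\|^2_{1,h} + \|\psi_h\|^2_{0,\Omega}$. Since $\|\psi_h\|^2_{0,\Omega}$ is already present on both sides, the whole task reduces to bounding $\|v_h\|^2_{1,\Omega} = \|v_h\|^2_{0,\Omega} + \|\nabla v_h\|^2_{0,\Omega}$ in terms of $\|\psi_h\|^2_{0,\Omega} + \|v_h\|^2_{\frac{1}{2},h}$; once this is done, adding $\|v_h\|^2_{\frac{1}{2},h}$ recovers $\|v_h\|^2_{1,h}$ and adding $\|\psi_h\|^2_{0,\Omega}$ yields the target.

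For this step I would simply assemble the two estimates obtained in the lead-up to the lemma, both of which already use membership in $\CV_h$ through the choice $q_h = v_h - \frac{1}{|\Omega|}\int_\Omega v_h\,dx \in Q_h$. First I would square the gradient bound $\|\nabla v_h\|_{0,\Omega} \leq C(\|\psi_h\|_{0,\Omega} + \|v_h\|_{\frac{1}{2},h})$ and use $(a+b)^2 \leq 2(a^2+b^2)$ to get $\|\nabla v_h\|^2_{0,\Omega} \leq C(\|\psi_h\|^2_{0,\Omega} + \|v_h\|^2_{\frac{1}{2},h})$. Feeding this into the squared Poincar\'e-type bound $\|v_h\|_{0,\Omega} \leq C(\|\nabla v_h\|_{0,\Omega} + \|v_h\|_{\frac{1}{2},h})$ controls $\|v_h\|^2_{0,\Omega}$ by the same right-hand side. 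Adding the two resulting inequalities together with the trivial contribution of $\|v_h\|^2_{\frac{1}{2},h}$ gives $\|v_h\|^2_{1,h} \leq C(\|\psi_h\|^2_{0,\Omega} + \|v_h\|^2_{\frac{1}{2},h})$, and adding $\|\psi_h\|^2_{0,\Omega}$ to both sides finishes the estimate with $\alpha_0$ taken as the reciprocal of the accumulated constant.

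The only point requiring care is that every constant in this chain is mesh-independent, so that $\alpha_0$ does not degenerate as $h \to 0$; this is guaranteed because the trace inequality $\|\frac{\partial v_h}{\partial \bn}\|_{-\frac{1}{2},h} \leq C\|\nabla v_h\|_{0,\Omega}$ and the Poincar\'e-Friedrichs inequality invoked above both hold with $h$-independent constants. The genuinely substantial work, choosing the test function $q_h$ that reproduces $\nabla v_h$ and $\frac{\partial v_h}{\partial \bn}$ while lying in $Q_h$, and absorbing the boundary term $\int_\Gamma \frac{\partial v_h}{\partial \bn}\, v_h\,d\sigma$ via the mesh-dependent Cauchy-Schwarz inequality, has already been carried out before the statement. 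Consequently I expect no real obstacle inside the lemma itself: it is essentially an algebraic consolidation of the two norm bounds already in hand.
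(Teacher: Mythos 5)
Your proposal is correct and takes essentially the same route as the paper: the paper likewise derives the bounds $\|\nabla v_h\|_{0,\Omega} \leq C(\|\psi_h\|_{0,\Omega} + \|v_h\|_{\frac{1}{2},h})$ and $\|v_h\|_{0,\Omega} \leq C(\|\nabla v_h\|_{0,\Omega} + \|v_h\|_{\frac{1}{2},h})$ in the lead-up (via the test function $q_h = v_h - \frac{1}{|\Omega|}\int_{\Omega} v_h\,dx$, the discrete trace inequality, and Poincar\'e--Friedrichs) and then states the lemma as their immediate consequence of the identity $a_h((v_h,\psi_h),(v_h,\psi_h)) = \|\psi_h\|^2_{0,\Omega} + \|v_h\|^2_{\frac{1}{2},h}$. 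The squaring-and-summing consolidation you spell out is exactly the step the paper leaves implicit, so there is no gap.
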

Hence we have obtained the well-posedness of the saddle point problem \eqref{dbiharm}. 
\begin{lemma}\label{lemma1}
The saddle point problem \eqref{dbiharm} has a unique solution 
$((u_h,\phi_h),p_h) \in V_h\times S_h$.
\end{lemma}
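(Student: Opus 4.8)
The plan is to obtain unique solvability of \eqref{dbiharm} directly from the standard well-posedness theory for discrete saddle point problems, namely Brezzi's splitting theorem \cite{Bra01}, \emph{all} of whose hypotheses have in fact already been verified in the paragraphs preceding the statement. It therefore only remains to collect these ingredients and to check that they are expressed in a single compatible pair of norms, here $\|\cdot\|_a$ on $V_h$ and $\|\cdot\|_{Q_h}$ on the multiplier space $Q_h$.

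First I would record the continuity estimates. The boundedness of $a_h(\cdot,\cdot)$ in the $\|\cdot\|_a$-norm follows, as noted above, from the Cauchy--Schwarz inequality, the definition \eqref{ip} of $\langle\cdot,\cdot\rangle_{\frac12,h}$, and the trace inequality of \cite{FS95}; the boundedness of $b_h(\cdot,\cdot)$ is the displayed bound
\[ |b_h((w_h,\psi_h),q_h)| \leq \|\psi_h\|_{0,\Omega}\|q_h\|_{0,\Omega} + \|w_h\|_{1,h}\|\Delta_h q_h\|_{-1,h} \leq \|(w_h,\psi_h)\|_a\,\|q_h\|_{Q_h}. \]
Second, the inf-sup condition \eqref{infsup} for $b_h(\cdot,\cdot)$ has already been established by the two-step argument (first $v_h=0$, then $\psi_h=0$). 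Third, the required ellipticity of $a_h(\cdot,\cdot)$ on the kernel $\CV_h$ is precisely the coercivity lemma proved immediately above: since $\|(v_h,\psi_h)\|_a^2 = \|\psi_h\|_{0,\Omega}^2 + \|v_h\|_{1,h}^2$ by \eqref{meshnorm}, that lemma reads $a_h((v_h,\psi_h),(v_h,\psi_h)) \geq \alpha_0 \|(v_h,\psi_h)\|_a^2$ for every $(v_h,\psi_h)\in\CV_h$. With continuity, the inf-sup condition, and kernel-ellipticity in hand, Brezzi's theorem delivers, for the continuous data functionals $\ell_h$ and $g_h$, a unique solution $((u_h,\phi_h),p_h)$ of \eqref{dbiharm}, together with the associated stability estimate.

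The conceptual crux--and the only point beyond routine verification--is that $a_h$ is coercive merely on $\CV_h$ and not on all of $V_h$: since $a_h((v_h,\psi_h),(v_h,\psi_h)) = \|\psi_h\|_{0,\Omega}^2 + \|v_h\|_{\frac12,h}^2$ carries no interior $H^1$-control of $v_h$, a direct application of the Lax--Milgram lemma on $V_h$ is impossible. It is exactly the inf-sup condition \eqref{infsup} that compensates for this deficiency, and combining it with the kernel-ellipticity is the mechanism by which Brezzi's theorem recovers well-posedness on the full space. This dependence on \eqref{infsup} is why the stability of $b_h(\cdot,\cdot)$ had to be treated before the present statement.

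Finally, because the whole problem lives in the finite-dimensional space $V_h\times Q_h$, the same conclusion can be reached by a self-contained uniqueness argument, which I would offer as an alternative route: setting $f=g_D=g_N=0$, the second equation of \eqref{dbiharm} forces $(u_h,\phi_h)\in\CV_h$; testing the first equation with $(v_h,\psi_h)=(u_h,\phi_h)$ and invoking the kernel-ellipticity then gives $(u_h,\phi_h)=0$; the first equation thereby reduces to $b_h((v_h,\psi_h),p_h)=0$ for all $(v_h,\psi_h)\in V_h$, whence \eqref{infsup} yields $p_h=0$. Since the resulting square linear system has trivial kernel, existence follows from uniqueness, completing the proof.
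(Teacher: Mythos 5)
Your proposal is correct and matches the paper's (implicit) argument exactly: the paper derives this lemma as an immediate consequence of the continuity of $a_h$ and $b_h$, the inf-sup condition \eqref{infsup}, and the kernel coercivity lemma, i.e.\ by invoking the standard Brezzi theory for discrete saddle point problems, which is precisely what you assemble. Your supplementary finite-dimensional uniqueness-implies-existence argument is a valid (and self-contained) alternative, but it is not a different route in substance since it relies on the same two ingredients, kernel ellipticity and \eqref{infsup}.
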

We use the following lemma to prove the 
a priori error estimate for the discrete solution \cite{Lam11a}.

\begin{lemma}\label{lemma2}
Let $u$ be the solution of the biharmonic equation \eqref{biharm} with 
non-homogeneous boundary condition \eqref{nonhom}, and 
$\phi = \Delta u$ as well as $p = -\phi$. Let $p \in H^{k+1}(\Omega)$.  Let 
$((u_h,\phi_h),p_h) \in V_h\times Q_h$ be the solution of the discrete 
problem \eqref{dbiharm}. Then there exists a constant $C>0$ independent of 
the mesh-size $h$ so that 
\begin{equation}\label{est1}
\|(u-u_h,\phi-\phi_h)\|_{a} \leq 
C\left(\inf_{(w_h,\xi_h)\in \CW_{h}} \|(u-w_h,\phi-\xi_h)\|_{a}+h^k \|p\|_{k+1,\Omega}\right),
\end{equation}
where 
\[ \CW_h=\{(w_h,\xi_h)\in V_h|\,  \int_{\Omega} \xi_hq_h\,dx - \langle \Delta_h q_h,w_h\rangle = 
\langle g_N,q_h \rangle_{\Gamma} - \langle   \frac{\partial q_h}{\partial\bn} ,g_D \rangle_{\Gamma} ,\;
q_h \in Q_h \}.\]
\end{lemma}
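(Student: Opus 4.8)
The plan is to prove \eqref{est1} by a Céa-type argument that combines the kernel coercivity of Lemma~\ref{lemma1} with an exact Galerkin orthogonality. First I would fix an arbitrary $(w_h,\xi_h)\in\CW_h$ and set $(e_h,\epsilon_h)=(u_h-w_h,\phi_h-\xi_h)\in V_h$. Since both $(u_h,\phi_h)$ (by the second equation of \eqref{dbiharm}) and $(w_h,\xi_h)$ satisfy the same affine constraint defining $\CW_h$, their difference lies in the kernel, i.e. $(e_h,\epsilon_h)\in\CV_h$; here I use that for a finite element function $q_h$ the generalised normal derivative reduces to the classical one, so that $g_h$ and the right-hand side of $\CW_h$ coincide. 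The coercivity estimate then gives $\alpha_0\|(e_h,\epsilon_h)\|_a^2\le a_h((e_h,\epsilon_h),(e_h,\epsilon_h))$, and it remains to bound the right-hand side.

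The crucial step is to establish the orthogonality relation
\begin{equation*}
a_h((u,\phi),(v_h,\psi_h))+b_h((v_h,\psi_h),p)=\ell_h(v_h),\qquad (v_h,\psi_h)\in V_h .
\end{equation*}
To see this I would insert $p=-\phi$, so that the two volume terms $\int_\Omega\phi\psi_h\,dx$ and $\int_\Omega\psi_h p\,dx$ cancel, leaving $\langle u,v_h\rangle_{\frac12,h}-\langle v_h,\Delta_h p\rangle$. Using the definition of $\Delta_h$ together with $p=-\Delta u$ and one integration by parts, the term $\langle v_h,\Delta_h p\rangle$ reduces to $-\int_\Omega f v_h\,dx$, because the boundary contributions $\int_\Gamma\frac{\partial\Delta u}{\partial\bn}v_h\,d\sigma$ cancel and $\Delta^2 u=f$. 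What survives is $\langle u,v_h\rangle_{\frac12,h}+\int_\Omega f v_h\,dx$, and comparing with $\ell_h(v_h)$ the remainder is exactly $\langle u-g_D,v_h\rangle_{\frac12,h}$, which vanishes since $u=g_D$ on $\Gamma$ and the inner product $\langle\cdot,\cdot\rangle_{\frac12,h}$ involves only boundary values. This yields the orthogonality with no consistency error.

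Subtracting the first equation of \eqref{dbiharm} (tested with $(e_h,\epsilon_h)$) from the orthogonality relation then gives $a_h((u-u_h,\phi-\phi_h),(e_h,\epsilon_h))=-b_h((e_h,\epsilon_h),p-q_h)$ for every $q_h\in Q_h$, where I exploit $(e_h,\epsilon_h)\in\CV_h$ to insert an arbitrary $q_h\in Q_h$ into the second slot of $b_h$. Writing $a_h((e_h,\epsilon_h),(e_h,\epsilon_h))=a_h((u_h-u,\phi_h-\phi),(e_h,\epsilon_h))+a_h((u-w_h,\phi-\xi_h),(e_h,\epsilon_h))$ and applying the continuity of $a_h$ and of $b_h$ (the latter in the form displayed before \eqref{infsup}), I would obtain
\begin{equation*}
\alpha_0\|(e_h,\epsilon_h)\|_a\le C\left(\|(u-w_h,\phi-\xi_h)\|_a+\|p-q_h\|_{Q_h}\right).
\end{equation*}
A final triangle inequality $\|(u-u_h,\phi-\phi_h)\|_a\le\|(u-w_h,\phi-\xi_h)\|_a+\|(e_h,\epsilon_h)\|_a$ together with the infima over $(w_h,\xi_h)\in\CW_h$ and over $q_h\in Q_h$ completes the estimate, provided $\inf_{q_h\in Q_h}\|p-q_h\|_{Q_h}\le Ch^k\|p\|_{k+1,\Omega}$.

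I expect the two delicate points to be the orthogonality computation and this last approximation bound. For the former, care is needed with the generalised normal derivative and with the fact that the mesh-dependent form $a_h$ replaces the genuine $H^{\frac12}(\Gamma)$ inner product; it is precisely the boundary condition $u=g_D$ that makes the residual boundary term disappear, so that the crime in $a_h$ costs nothing. For the latter, the $L^2$ part of $\|p-q_h\|_{Q_h}$ is controlled by the standard approximation property of $S_h$, while the term $\|\Delta_h(p-q_h)\|_{-1,h}$ must be estimated through the definition of $\Delta_h$ and the trace inequality $\|\frac{\partial(p-q_h)}{\partial\bn}\|_{-\frac12,h}\le C\|\nabla(p-q_h)\|_{0,\Omega}$ of \cite{FS95}, which together yield the $h^k\|p\|_{k+1,\Omega}$ factor.
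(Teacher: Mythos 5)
Your proposal follows essentially the same route as the paper: fix $(w_h,\xi_h)\in\CW_h$, observe $(u_h-w_h,\phi_h-\xi_h)\in\CV_h$, apply the kernel coercivity of Lemma~\ref{lemma1}, split the error via $a_h((u-w_h,\phi-\xi_h),\cdot)$ plus a consistency term, reduce that term to $b_h((v_h,\psi_h),p-q_h)$ with an interpolant $q_h$ of $p$ (legitimate because $b_h(\cdot,q_h)$ vanishes on $\CV_h$), and finish with a triangle inequality. One place where you are actually more careful than the paper: the paper justifies the Galerkin orthogonality on $\CV_h$ by simply citing \eqref{saddle} and \eqref{dbiharm}, even though the continuous problem uses the genuine $H^{1/2}(\Gamma)$ inner product while the discrete one uses $\langle\cdot,\cdot\rangle_{\frac12,h}$; your explicit computation --- cancellation of the volume terms via $p=-\phi$, reduction of $\langle v_h,\Delta_h p\rangle$ to $-\int_\Omega f v_h\,dx$ by integration by parts, and the observation that $u=g_D$ on $\Gamma$ makes the mesh-dependent boundary terms agree exactly with $\ell_h$ --- is the correct way to see that this variational crime produces no consistency error, and it needs $p\in H^{3/2+\epsilon}(\Omega)$, which the hypothesis $p\in H^{k+1}(\Omega)$ supplies.

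The one step you must repair is the final approximation bound. The trace inequality $\left\|\frac{\partial v_h}{\partial\bn}\right\|_{-\frac12,h}\le C\|\nabla v_h\|_{0,\Omega}$ from \cite{FS95} is an \emph{inverse} estimate, valid only for finite element functions; applied to the non-discrete function $p-q_h$ it is false with a mesh-independent constant (a boundary-layer function of width $\delta\ll h$ violates it). The correct tool is the elementwise scaled trace inequality for $H^2$ functions, which carries the extra term $h\,|p-q_h|_{2,\Omega}$ on the right: for the interpolant this contributes $h\cdot h^{k-1}\|p\|_{k+1,\Omega}$, so your target bound $\inf_{q_h\in Q_h}\|p-q_h\|_{Q_h}\le Ch^k\|p\|_{k+1,\Omega}$ is true and the proof closes. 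This is precisely the content of the estimate the paper invokes from \cite[Lemma 2.3]{ThomeeBook}, namely $\left|\langle\frac{\partial(p-\tilde p_h)}{\partial\bn},v_h\rangle_{\Gamma}\right|\le h^k\|p\|_{k+1,\Omega}\|v_h\|_{\frac12,h}$; substituting that citation (or the scaled trace argument) for your appeal to \cite{FS95} makes your proof complete and equivalent to the paper's.
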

\begin{proof}
Let $(w_h,\xi_h) \in \CW_h$. 
Then $(w_h,\xi_h) $ satisfies
\[ \int_{\Omega} \xi_hq_h\,dx - \langle \Delta_h q_h,w_h\rangle = 
\langle g_N,q_h \rangle_{\Gamma} - \langle   \frac{\partial q_h}{\partial\bn} ,g_D \rangle_{\Gamma} ,\;
q_h \in Q_h.\]
Thus \eqref{dbiharm} implies 
$(u_h-w_h,\phi_h-\xi_h) \in \CV_h$, and hence coercivity of $a_h(\cdot,\cdot)$ on $\CV_h$ yields 
\[ \alpha_0 \|(u_h-w_h,\phi_h-\xi_h)\|_{a}
 \leq \sup_{(v_h,\psi_h)\in \CV_h} 
\frac{a_h((u_h-w_h,\phi_h-\xi_h),(v_h,\psi_h))}{\|(v_h, \psi_h)\|_{a}}.
\] 
Since from \eqref{dbiharm} and \eqref{saddle} 
$a_h((u-u_h,\phi-\phi_h),(v_h,\psi_h))+ b_h((v_h,\psi_h),p)=0$ for all 
$(v_h,\psi_h) \in \CV_h$, we have 
\begin{eqnarray*} 
a_h((u_h-w_h,\phi_h-\xi_h),(v_h,\psi_h))&=& 
a_h((u-w_h,\phi-\xi_h),(v_h,\psi_h))+a_h((u_h-u,\phi_h-\phi),(v_h,\psi_h))\\
&=&a_h((u-w_h,\phi-\xi_h),(v_h,\psi_h))+ b_h((v_h,\psi_h),p).
\end{eqnarray*}
Let $\tilde p_h \in Q_h$ be a  finite element interpolant for $p$. 
Using the fact that 
 \[ b_h((v_h,\psi_h),p) = 
\int_{\Omega} \psi_h p \,dx + \int_{\Omega} \nabla p\cdot \nabla v_h\,dx - 
\langle  \frac{\partial p}{\partial\bn} ,v_h \rangle_{\Gamma},\;\;\text{and}\;\;
(v_h,\psi_h) \in \CV_h,\] we get 
\[ b_h((v_h,\psi_h),p)= b_h((v_h,\psi_h),p-\tilde p_h)= 
\int_{\Omega} \psi_h (p-\tilde p_h) \,dx + \int_{\Omega} \nabla (p-\tilde p_h)\cdot \nabla v_h\,dx - 
\langle  \frac{\partial (p-\tilde p_h)}{\partial\bn} ,v_h \rangle_{\Gamma}.
\]
\noindent
We note that  the interpolant $\tilde p_h$ satisfies \cite[Lemma 2.3]{ThomeeBook}
\[  \left|\langle  \frac{\partial (p-\tilde p_h)}{\partial\bn} ,v_h \rangle_{\Gamma} \right|
\leq h^k \|p\|_{k+1,\Omega}\|v_h\|_{\frac{1}{2},h}.\]
And hence
\[ |b_h((v_h,\psi_h),p)|\leq C h^k \|p\|_{k+1,\Omega}\,\|(v_h, \psi_h)\|_{a}.\]
Thus 
\begin{eqnarray*}
\alpha_0 \|(u_h-w_h,\phi_h-\xi_h)\|_{a}
&\leq& \sup_{(v_h,\psi_h)\in \CV_h} 
\frac{a_h((u-w_h,\phi-\xi_h),(v_h,\psi_h))}{\|(v_h, \psi_h)\|_{a}}
+C h^k \|p\|_{k+1,\Omega} \\
&\leq& \|(u-w_h,\phi-\xi_h)\|_{a}+C h^k \|p\|_{k+1,\Omega},
\end{eqnarray*}
where we have used the fact that the continuity constant of the bilinear form $a(\cdot,\cdot)$ is 1.
Finally, a triangle inequality yields the estimate \eqref{est1}:
\begin{eqnarray*}
\|(u-u_h,\phi-\phi_h)\|_{a} &\leq&  
\|(u-w_h,\phi-\xi_h)\|_{a} + \|(w_h-u_h,\xi_h-\phi_h)\|_{a}\\
&\leq& \left(1+\frac{1}{\alpha_0}\right)\|(u-w_h,\phi-\xi_h)\|_{a}+
\frac{C}{\alpha_0} h^k \|p\|_{k+1,\Omega}.
\end{eqnarray*} 
\end{proof}

\begin{theorem}\label{th1}
Let $u$ be the solution of the biharmonic equation \eqref{biharm} with 
non-homogeneous boundary condition \eqref{nonhom}, and 
$\phi = \Delta u$ as well as $p = -\phi$. Let 
$((u_h,\phi_h),p_h) \in V_h\times Q_h$ be the solution of the discrete saddle point 
problem \eqref{dbiharm}. Let $u \in H^{k+1}(\Omega)\cap H^1_0(\Omega)$,
$\phi \in H^{k}(\Omega), \, p \in H^{k+1}(\Omega)$, and  
Assumptions \eqref{ass0} and \eqref{ass1} are satisfied. 
Then there exists a constant $C>0$ independent of 
the mesh-size $h$ so that 
\begin{equation}\label{est2}
\|(u-u_h,\phi-\phi_h)\|_{a} \leq 
C h^k \left(\|u\|_{k+1,\Omega}+|\phi|_{k,\Omega}+\|p\|_{k+1,\Omega}\right).
\end{equation}
\end{theorem}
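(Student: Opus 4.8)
The plan is to feed an explicit near-best approximant from the affine set $\CW_h$ into Lemma~\ref{lemma2}. Since that lemma already bounds $\|(u-u_h,\phi-\phi_h)\|_a$ by $C\big(\inf_{(w_h,\xi_h)\in\CW_h}\|(u-w_h,\phi-\xi_h)\|_a+h^k\|p\|_{k+1,\Omega}\big)$, the term $h^k\|p\|_{k+1,\Omega}$ is already of the advertised order, and everything reduces to exhibiting one pair $(w_h,\xi_h)\in\CW_h$ for which both $\|u-w_h\|_{1,h}$ and $\|\phi-\xi_h\|_{0,\Omega}$ are $O(h^k)$. Recalling the definition \eqref{meshnorm} of $\|\cdot\|_a$, these two bounds then yield \eqref{est2} by inserting them into Lemma~\ref{lemma2}.

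First I would fix $w_h\in S_h$. A convenient choice is the elliptic (Ritz) projection of $u$ with matched mean value, since then $\int_\Omega\nabla(u-w_h)\cdot\nabla q_h\,dx=0$ for every $q_h\in S_h\supseteq Q_h$, which kills the volume contribution to the constraint residual below. The standard estimate from \cite{Bra01} gives $\|u-w_h\|_{1,\Omega}\le Ch^k\|u\|_{k+1,\Omega}$, and the mesh-dependent boundary part $\|u-w_h\|_{\frac12,h}^2=\sum_{e\in\CC_h}h_e^{-1}\|u-w_h\|_{0,e}^2$ is controlled by a trace-approximation estimate $\|u-w_h\|_{0,e}^2\le C\big(h_e^{-1}\|u-w_h\|_{0,K}^2+h_e\|u-w_h\|_{1,K}^2\big)$, which after summation over $\CC_h$ yields $\|u-w_h\|_{\frac12,h}\le Ch^k\|u\|_{k+1,\Omega}$, hence $\|u-w_h\|_{1,h}\le Ch^k\|u\|_{k+1,\Omega}$. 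This is precisely where the weak (Nitsche) imposition pays off: the boundary condition enters only through this mesh-weighted norm, so it contributes at the optimal order $h^k$ rather than forcing the classical half-order loss.

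The core of the argument is the construction of $\xi_h\in M_h$ making $(w_h,\xi_h)\in\CW_h$ while keeping $\|\phi-\xi_h\|_{0,\Omega}$ optimal. I would start from a best $M_h$-approximation $\phi_h^\ast$ of $\phi$, which by Assumption~\ref{ass1} satisfies $\|\phi-\phi_h^\ast\|_{0,\Omega}\le Ch^k|\phi|_{k,\Omega}$, and then add a correction $\delta_h\in M_h$ chosen so that the $\CW_h$-constraint holds, i.e. $\int_\Omega\delta_h q_h\,dx$ matches the constraint residual of $(w_h,\phi_h^\ast)$ for every $q_h\in Q_h$, with the one-dimensional ambiguity removed by prescribing $\int_\Omega\delta_h\,dx=0$. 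Solvability and the bound $\|\delta_h\|_{0,\Omega}\le C\sup_{q_h\in Q_h}|\,\langle\delta_h,q_h\rangle\,|/\|q_h\|_{0,\Omega}$ follow from the inf-sup/biorthogonality Assumption~\ref{ass0}. To estimate the residual I would subtract the continuous constraint satisfied by the exact pair $(u,\phi)$ (consistency); with $w_h$ the elliptic projection and $u\in H^1_0(\Omega)$ the volume term drops out, so the residual collapses to the boundary term $\int_\Gamma\frac{\partial q_h}{\partial\bn}(u-w_h)\,d\sigma$, which I would bound via the Cauchy--Schwarz inequality \eqref{cst} in the mesh-dependent norms together with the trace inequality [(4) of \cite{FS95}], $\|\frac{\partial q_h}{\partial\bn}\|_{-\frac12,h}\le C\|\nabla q_h\|_{0,\Omega}$, and the $O(h^k)$ bound on $\|u-w_h\|_{\frac12,h}$ established above. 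Then $\|\phi-\xi_h\|_{0,\Omega}\le\|\phi-\phi_h^\ast\|_{0,\Omega}+\|\delta_h\|_{0,\Omega}\le Ch^k(|\phi|_{k,\Omega}+\|u\|_{k+1,\Omega})$.

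The step I expect to be the main obstacle is exactly this control of the constraint residual for $\delta_h$. The difficulty is structural: the pair $(w_h,\phi_h^\ast)$ cannot in general be made to satisfy the $\CW_h$-constraint with both components individually optimal, so the whole estimate hinges on showing that the leftover boundary residual is genuinely of order $h^k$ measured in the dual norm that Assumption~\ref{ass0} supplies. Balancing the choice of $w_h$ (so that the volume part is orthogonal to $Q_h$), the mesh-dependent norms on $\Gamma$, and the biorthogonality of $M_h$ against $S_h$ is delicate, and any slack here is precisely what would reintroduce the sub-optimal $h^{k-\frac12}$ rate that the weak imposition is designed to eliminate. Once this residual bound is in hand, combining the two estimates in $\|\cdot\|_a$ and applying Lemma~\ref{lemma2} gives \eqref{est2} immediately.
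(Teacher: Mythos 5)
Your overall scaffolding (feed a good competitor from $\CW_h$ into Lemma~\ref{lemma2}; the $h^k\|p\|_{k+1,\Omega}$ term is already fine) matches the paper, but your construction of the competitor has a genuine gap at exactly the step you flagged as the main obstacle, and it does not close. Your correction $\delta_h\in M_h$ must satisfy $\int_\Omega\delta_h q_h\,dx=-R(q_h)$ for all $q_h\in Q_h$, and Assumption~\ref{ass0} converts this moment data into an $L^2$ bound only against the $L^2$ norm of the test function: $\|\delta_h\|_{0,\Omega}\le C\sup_{q_h\in Q_h}|R(q_h)|/\|q_h\|_{0,\Omega}$. But your residual bound for the boundary term,
\begin{equation*}
\Bigl|\int_\Gamma \tfrac{\partial q_h}{\partial \bn}(u-w_h)\,d\sigma\Bigr|
\le \bigl\|\tfrac{\partial q_h}{\partial \bn}\bigr\|_{-\frac12,h}\,\|u-w_h\|_{\frac12,h}
\le C\,\|\nabla q_h\|_{0,\Omega}\,h^k\|u\|_{k+1,\Omega},
\end{equation*}
is small only relative to $\|\nabla q_h\|_{0,\Omega}$, not $\|q_h\|_{0,\Omega}$; there is no uniform bound of $\|\nabla q_h\|_{0,\Omega}$ by $\|q_h\|_{0,\Omega}$, and the inverse inequality bridges them only at cost $h^{-1}$. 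Hence your construction yields $\|\delta_h\|_{0,\Omega}\le Ch^{k-1}\|u\|_{k+1,\Omega}$, giving a final rate $h^{k-1}$ --- worse even than the sub-optimal $h^{k-\frac12}$ the theorem is meant to improve. The obstruction is structural: the residual is naturally small in a dual norm involving $\|q_h\|_{1,h}$, while $\|\cdot\|_a$ forces you to pay for $\delta_h$ in $L^2$, and no choice of $w_h$ fixes this (matching $u-w_h$ orthogonally to $P_{k-1}$ on boundary edges kills the boundary term but destroys the Ritz orthogonality, relocating the same $O(h^{k-1})$ loss to the volume term).

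The paper avoids ever incurring a residual: it takes $w_h=\Pi_h^* u$ and $\xi_h=\Pi_h\phi$, the biorthogonal projections defined by $\int_\Omega \Pi_h^* u\,\eta_h\,dx=\int_\Omega u\,\eta_h\,dx$ for $\eta_h\in M_h$ and $\int_\Omega \Pi_h\phi\,q_h\,dx=\int_\Omega\phi\,q_h\,dx$ for $q_h\in S_h$ (well defined by Assumption~\ref{ass0}). The key observation is that the constraint pairing in $\CW_h$ sees $w_h$ only through its $M_h$-moments, since $\Delta_h q_h\in M_h$ gives $\langle \Delta_h q_h,\Pi_h^* u\rangle=\int_\Omega (\Delta_h q_h)\,u\,dx$; likewise $\int_\Omega \Pi_h\phi\,q_h\,dx=\int_\Omega\phi\,q_h\,dx$ because $q_h\in S_h$. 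So the discrete constraint for $(\Pi_h^*u,\Pi_h\phi)$ coincides verbatim with the consistency relation satisfied by the exact pair $(u,\phi)$, i.e.\ $(\Pi_h^*u,\Pi_h\phi)\in\CW_h$ exactly, with $\delta_h=0$ --- there is no Green-formula splitting into volume and boundary pieces, which is precisely the splitting that generates your unbridgeable dual-norm mismatch. Optimality then follows from $\|u-\Pi_h^*u\|_{1,h}\le Ch^k\|u\|_{k+1,\Omega}$ (Lemma~1 of \cite{Ste95}) and $\|\phi-\Pi_h\phi\|_{0,\Omega}\le Ch^k|\phi|_{k,\Omega}$ (\cite{LW04a}, using Assumptions~\ref{ass0} and \ref{ass1}), and Lemma~\ref{lemma2}. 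A minor additional slip in your write-up: the residual does not ``collapse to the boundary term'' --- the piece $\int_\Omega(\phi_h^\ast-\phi)q_h\,dx$ survives for a generic best approximation $\phi_h^\ast$ (it vanishes only for $\phi_h^\ast=\Pi_h\phi$) --- though that piece is harmless, being $O(h^k)\|q_h\|_{0,\Omega}$; the fatal term is the boundary one.
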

\begin{proof}
Let $\Pi_h:L^2(\Omega) \to M_h$ and $\Pi^*_h:L^2(\Omega) \to S_h$ 
be two projections defined by 
\[ \int_{\Omega}  \Pi_h v \, q_h\,dx = \int_{\Omega} v \, q_h\,dx,\; q_h \in S_h,\quad\text{and}\]
\[ \int_{\Omega}  \Pi^*_h v \, \eta_h\,dx = \int_{\Omega} v \, \eta_h\,dx,\; \eta_h \in M_h.\]
These projectors are well-defined by Assumption \ref{ass0}.
Moreover, using Assumptions \ref{ass0} and \ref{ass1} we have  \cite{LW04a}
\begin{equation}
\label{eqn21}
\|\Pi_h v\|_{0,\Omega} \leq C \|v\|_{0,\Omega},\;\text{and}\;
\|\Pi_h w - w\|_{0,\Omega} \leq Ch^k \|w\|_{k,\Omega}
\;\text{for}\;  v \in L^2(\Omega),\;\text{and}\;  w \in H^k(\Omega). 
\end{equation}
\noindent
 Similarly, for $v \in L^2(\Omega)$ and $w \in H^1(\Omega)$, we have 
 \cite{LW04a} 
 \begin{equation}\label{eqn22}
\|\Pi^*_h v\|_{0,\Omega} \leq C \|v\|_{0,\Omega},\quad\text{and}\quad 
\|\Pi^*_h w \|_{1,\Omega} \leq C \|w\|_{1,\Omega}.
\end{equation}
We also have  for $r = \{0,1\}$ and $w \in H^{k+1}(\Omega)$
\begin{equation}\label{eqn21}
\|\Pi^*_h w - w\|_{r,\Omega} \leq Ch^{k+1-r} \|w\|_{k+1,\Omega}.
\end{equation}
\noindent 
Moreover,  for $w \in H^{k+1}(\Omega)$,  for the projector $\Pi_h^*$, we have
[Lemma 1 of  \cite{Ste95}]
\begin{equation}\label{eqn20}
\|w- \Pi^*_h w\|_{1,h} \leq C h^k \|w\|_{k+1,\Omega}.\end{equation}
\noindent 
For the exact solution $\phi = \Delta u$, we get 
\begin{equation}\label{est11} 
\int_{\Omega} \phi \,q_h\,dx  - \langle \Delta_h q_h,u\rangle = 
\langle   \frac{\partial q_h}{\partial\bn} ,g_D \rangle_{\Gamma}  + 
\langle   g_N ,q_h \rangle_{\Gamma} ,\; q_h \in Q_h.
\end{equation}
Since $\Delta_h q_h \in M_h$, we have 
\[   \langle \Delta_h q_h,\Pi^*_h u\rangle =  \int_{\Omega} \Delta_h q_h\, u\,dx.\]
Thus we have
\begin{equation}\label{est12} 
\int_{\Omega}\Pi_h \phi \,q_h\,dx - \langle \Delta_h q_h,\Pi^*_h u\rangle = 
\langle   \frac{\partial q_h}{\partial\bn} ,g_D \rangle_{\Gamma} + 
\langle   g_N ,q_h \rangle_{\Gamma} ,\; q_h \in Q_h.
\end{equation}
Hence we have obtained that
$ (\Pi^*_hu, \Pi_h \phi) \in \CW_h$, and 
\[ \|(u-\Pi^*_h u,\phi-\Pi_h \phi)\|_{a} \leq 
C h^k \left(\|u\|_{k+1,\Omega}+|\phi|_{k,\Omega}\right).\]
The proof now follows from Lemma \ref{lemma2}.
\end{proof}
\begin{remark}
The existing error estimate approaches require 
an extra regularity of the solution $u$ \cite{Li99,Lam11a}. 
The energy error estimate in \cite{Lam11a,DP01} is sub-optimal even 
with the extra regularity, whereas the error estimate in \cite{Li99} is optimal 
but the approach works only  on rectangular meshes with a special structure. 
\end{remark}

\section{Algebraic formulation} \label{sec:alg}
To obtain an efficient numerical scheme in which 
all the auxiliary variables  (the vorticity $\phi_h$ and the Lagrange multiplier $p_h$) can be 
statically condensed out from the system, we construct 
a biorthogonal system for the sets of basis functions of $Q_h$ and $M_h$.  
Let $\{\varphi_1,\cdots,\varphi_n\}$ be a finite element basis for
the space $Q_h$. A finite element basis 
$\{\mu_1,\cdots,\mu_n\}$ for the space $M_h$ with 
$\supp \mu_i =\supp \varphi_i$, $1\leq i \leq n$,
is constructed in such a 
way  that the basis functions of $Q_h$ and 
$M_h$ satisfy a condition of biorthogonality relation
\begin{eqnarray} \label{eq:biorth}
  \int_{\Omega} \mu_i \ \varphi_j \, dx = c_j \delta_{ij},
\; c_j\neq 0,\; 1\le i,j \le n,
\end{eqnarray}
 where $n := \dim M_h = \dim Q_h$,  $\delta_{ij}$ is 
 the Kronecker symbol, and $c_j$ a scaling factor  proportional to the area $|\supp \phi_j|$.
 The basis functions of $M_h$ are constructed in a 
 reference element and they satisfy 
\eqref{ass0}, \eqref{ass1} and \eqref{eq:biorth} 
\cite{LSW05,Lam06,LW07}. 

Let $\vec{u}$, $\vec{\phi}$ and $\vec{p}$ be  the vector representations of 
the solution $u_h$, $\phi_h$ and $p_h$, respectively. 
Let $\tA\vec{u}$, $\tM\vec{\phi}$ and $\tD\vec{\phi}$ be algebraic representations of the bilinear 
forms $\int_{\Omega}  u_h \Delta_h q_h\,dx $, 
$\int_{\Omega}\phi_h \psi_h\,dx$ and  
$\int_{\Omega}\phi_h q_h\,dx$, respectively, where 
$u_h \in S_h$, $q_h \in Q_h$, $\phi_h,\psi_h \in M_h$.
We also denote the algebraic representation of the bilinear form
$\langle u_h,v_h\rangle_{\frac{1}{2},h} $ by 
$\tB_{\Gamma}\vec{u}$. Although the bilinear form $\langle u_h,v_h\rangle_{\frac{1}{2},h} $
is restricted to the boundary $\Gamma$ of the domain $\Omega$, $\tB_{\Gamma}$
is the extended form of the algebraic representation so that 
the number of columns of the 
matrix $\tB_{\Gamma}$ is equal to the number of components in $\vec{u}$, where entries of the matrix $\tB_{\Gamma}$ 
corresponding to interior nodes of the mesh are all set to zero. 
Then the  algebraic formulation of the saddle point problem \eqref{dbiharm}
is given by
\begin{equation} \label{biharmg}
\left[\begin{array}{cccc} 
\tB_{\Gamma}  & 0 & -\tA^T\\
 0 & \tM &\tD \\
-\tA & \tD & 0
 \end{array} \right]
\left[\begin{array}{ccc} \vec{u} \\ \vec{\phi} \\ \vec{p} 
\end{array}\right]=
\left[\begin{array}{ccc} \vec{f} \\0\\ \vec{g}  \end{array}\right],
\end{equation}
where $\vec{f}$ is the vector associated with 
the linear form $\ell_h(v_h)$, and $\vec{g}$ is the vector representation of 
$g_h(q_h)$.
Since the matrix $\tD$ is diagonal, we can do the static condensation of 
unknowns $\vec{\phi}$ and $\vec{p}$ and arrive at the following 
linear system  based on the unknown $\vec{u}$ associated only with the stream function: 
\begin{equation} \label{static}
\left(\tM_{\Gamma}+\tA^T \tD^{-1} \tM \tD^{-1}\tA\right) \vec{u} = 
(\vec{f}-(\tA^T \tD^{-1} \tM\tD^{-1}) \vec{g}).
\end{equation}
Since the inverse of the matrix $\tD$ is diagonal,
the system matrix in \eqref{static} is sparse.
It is important to have the system matrix to have sparse structure 
if an iterative solver is to be applied.
The vector corresponding to the vorticity $\vec{\phi}$ and the Lagrange multiplier $\vec{p}$
can be computed by simply inverting the
diagonal matrix using the second  and third blocks of \eqref{biharmg}. 

\section{Conclusion}
We have  proposed a finite element formulation for the biharmonic equation 
with clamped boundary conditions leading to an optimal convergence rate  improving the existing a priori error estimate in the energy norm.
The main idea is to impose the Dirichlet boundary condition weakly 
using the Nitsche technique. The new formulation also allows to use a biorthogonal system that gives an efficient finite element approach. In contrast to other Nitsche approaches, we do not require a penalty parameter in our formulation.

\section*{Acknowledgement}
Part of this work was completed during 
my visit to the Indian Institute of Technology, 
Mumbai in 2023. I gratefully acknowledge their hospitality. I especially 
thank my host Prof. Neela Nataraj so much for her wonderful hospitality and kindness 
 during my stay. I also thank Dr Devika Shylaja for carefully reading an earlier version of 
 this manuscript and providing many constructive comments.
\bibliographystyle{siam}
\bibliography{total}
\end{document}